%2multibyte Version: 5.50.0.2890 CodePage: 1254

\documentclass[12pt]{article}
%%%%%%%%%%%%%%%%%%%%%%%%%%%%%%%%%%%%%%%%%%%%%%%%%%%%%%%%%%%%%%%%%%%%%%%%%%%%%%%%%%%%%%%%%%%%%%%%%%%%%%%%%%%%%%%%%%%%%%%%%%%%%%%%%%%%%%%%%%%%%%%%%%%%%%%%%%%%%%%%%%%%%%%%%%%%%%%%%%%%%%%%%%%%%%%%%%%%%%%%%%%%%%%%%%%%%%%%%%%%%%%%%%%%%%%%%%%%%%%%%%%%%%%%%%%%
\usepackage{amsfonts}
\usepackage{amsmath,amscd,amsthm,a4,amssymb}
\usepackage{amsmath,amscd,amsthm,a4,amssymb}

\setcounter{MaxMatrixCols}{10}
%TCIDATA{OutputFilter=Latex.dll}
%TCIDATA{Version=5.50.0.2890}
%TCIDATA{BibliographyScheme=Manual}
%TCIDATA{<META NAME="GraphicsSave" CONTENT="32">}

\newtheorem{theorem}{Theorem}
{}
\newtheorem{corollary}{Corollary}
{}

{}
\newtheorem{remark}{Remark}
{}

{}
\theoremstyle{plain}
{}

{}

{}

{}

{}

{}

{}

{}

{}

{}

{}

{}

{}

{}
\newtheorem{proposition}{Proposition}
{}

{}

\begin{document}
\begin{center}
{\Large \bf{Flat Rotational Surface with Pointwise 1-type Gauss map in E$^{4}$ \ }}
\end{center}
\centerline{\large Ferdag KAHRAMAN AKSOYAK  $^{1}$, Yusuf YAYLI $^{2}${\footnotetext{
{ E-mail: $^{1}$ferda@erciyes.edu.tr(F. Kahraman Aksoyak ); $^{2}$yayli@science.ankara.edu.tr (Y.Yayli)}} }}

\
\centerline{\it $^{1}$Erciyes University, Department of Mathematics,
Kayseri, Turkey}

\centerline{\it $^{2}$Ankara University, Department of Mathematics,
Ankara, Turkey}

\begin{abstract}
In this paper we study general rotational surfaces in the 4- dimensional
Euclidean space $\mathbb{E}^{4}$ and give a characterization of flat general
rotation surface with pointwise 1-type Gauss map. Also, we show that a
non-planar flat general rotation surface with pointwise 1-type Gauss map is
a Lie group if and only if it is a Clifford torus.
\end{abstract}

\begin{quote}\small
{\it{Key words and Phrases}: Rotation surface, Gauss map, Pointwise 1-type Gauss map , Euclidean space.}
\end{quote}
\begin{quote}\small
2010 \textit{Mathematics Subject Classification}: 53B25 ; 53C40  .
\end{quote}

\section{Introduction}

A submanifold $M$ of a Euclidean space $\mathbb{E}^{m}$ is said to be of
finite type if its position vector $x$ can be expressed as a finite sum of
eigenvectors of the Laplacian $\Delta $ of $M$, that is, $%
x=x_{0}+x_{1}+...x_{k}$, where $x_{0}$ is a constant map, $x_{1},...,x_{k}$
are non-constant maps such that $\Delta x_{i}=\lambda _{i}x_{i},$ $\lambda
_{i}\in $ $\mathbb{R}$, $i=1,2,...,k.$ If $\lambda _{1},\lambda _{2},$...,$%
\lambda _{k}$ are all different, then $M$ is said to be of $k-$type. This
definition was similarly extended to differentiable maps, in particular, to
Gauss maps of submanifolds \cite{chen1}.

If a submanifold $M$ of a Euclidean space or pseudo-Euclidean space has
1-type Gauss map $G$, then $G$ satisfies $\Delta G=\lambda \left( G+C\right)
$ for some $\lambda \in \mathbb{R}$ and some constant vector $C.$ Chen and
Piccinni made a general study on compact submanifolds of Euclidean spaces
with finite type Gauss map and they proved that a compact hypersurface $M$
of $\mathbb{E}^{n+1}$ has 1-type Gauss map if and only if $M$ is a
hypersphere in $\mathbb{E}^{n+1}$ \cite{chen1}$.$

Hovewer the Laplacian of the Gauss map of some typical well known surfaces
such as a helicoid, a catenoid and a right cone in Euclidean 3-space $%
\mathbb{E}^{3}$ take a somewhat different form, namely,
\begin{equation}
\Delta G=f\left( G+C\right)
\end{equation}
for some smooth function $f$ on $M$ and some constant vector $C.$ A
submanifold $M$ of a Euclidean space $\mathbb{E}^{m}$ is said to have
pointwise 1-type Gauss map if its Gauss map satisfies $\left( 1\right) $ for
some smooth function $f$ on $M$ and some constant vector $C.$ A submanifold
with pointwise 1-type Gauss map is said to be of the first kind if the
vector $C$ in $\left( 1\right) $ is zero\ vector. Otherwise, the pointwise
1-type Gauss map is said to be of the second kind.

Surfaces in Euclidean space and in pseudo-Euclidean space with pointwise
1-type Gauss map were recently studied in \cite{choi1}, \cite{choi2}, \cite%
{dursun2}, \cite{dursun3}, \cite{dursun4}, \cite{kim2}, \cite{kim1}. Also
Dursun and Turgay in \cite{dursun1} gave all general rotational surfaces in $%
\mathbb{E}^{4}$ with proper pointwise 1-type Gauss map of the first kind and
classified minimal rotational surfaces with proper pointwise 1-type Gauss
map of the second kind. Arslan et al. in \cite{arslan1} investigated
rotational embedded surface with pointwise 1-type Gauss map. Arslan at el.
in \cite{arslan2} gave necessary and sufficent conditions for Vranceanu
rotation surface to have pointwise 1-type Gauss map. Yoon in \cite{yoon2}
showed that flat Vranceanu rotation surface with pointwise 1-type Gauss map
is a Clifford torus.

In this paper, we study general rotational surfaces in the 4- dimensional
Euclidean space $\mathbb{E}^{4}$ and give a characterization of flat general
rotation with pointwise 1-type Gauss map. Also, we show that a non-planar
flat general rotation surface with pointwise 1-type Gauss map is a Lie group
if and only if it is a Clifford torus.\

\section{Preliminaries}

Let $M$ be an oriented $n-$dimensional submanifold in $m-$dimensional
Euclidean space $\mathbb{E}^{m}.$ Let $e_{1},$...,$e_{n},e_{n+1},$...,$e_{m}$
be an oriented local orthonormal frame in $\mathbb{E}^{m}$ such that $e_{1},$%
...,$e_{n}$ are tangent to $M$\ and $e_{n+1},$...,$e_{m}$ normal to $M.$ We
use the following convention on the ranges of indices: $1\leq i,j,k,$...$%
\leq n$, $n+1\leq r,s,t,$...$\leq m$, $1\leq A,B,C,$...$\leq m.$

Let $\tilde{\nabla}$ be the Levi-Civita connection of $\mathbb{E}^{m}$ and $%
\nabla $ the induced connection on $M$. Let $\omega _{A}$ be the dual-1 form
of $e_{A}$ defined by $\omega _{A}\left( e_{B}\right) =\delta _{AB}$. Also,
the connection forms $\omega _{AB}$ are defined by%
\begin{equation*}
de_{A}=\sum \limits_{B}\omega _{AB}e_{B},\text{ \  \ }\omega _{AB}+\omega
_{BA}=0.
\end{equation*}%
Then we have
\begin{equation}
\tilde{\nabla}_{e_{k}}^{e_{i}}=\sum \limits_{j=1}^{n}\omega _{ij}\left(
e_{k}\right) e_{j}+\sum \limits_{r=n+1}^{m}h_{ik}^{r}e_{r}
\end{equation}%
and%
\begin{equation}
\tilde{\nabla}_{e_{k}}^{e_{s}}=-A_{s}(e_{k})+\sum \limits_{r=n+1}^{m}\omega
_{sr}\left( e_{k}\right) e_{r},\text{ \ }D_{e_{k}}^{e_{s}}=\sum%
\limits_{r=n+1}^{m}\omega _{sr}\left( e_{k}\right) e_{r},
\end{equation}%
where $D$ is the normal connection, $h_{ik}^{r}$ the coefficients of the
second fundamental form $h$ and $A_{s}$ the Weingarten map in the direction $%
e_{s}.$

For any real function $f$ on $M$ the Laplacian of $f$ is defined by
\begin{equation}
\Delta f=-\sum \limits_{i}\left( \tilde{\nabla}_{e_{i}}\tilde{\nabla}%
_{e_{i}}f-\tilde{\nabla}_{\nabla _{e_{i}}^{e_{i}}}f\right) .
\end{equation}

If we define a covariant differention $\tilde{\nabla}h$ of the second
fundamental form $h$ on the direct sum of the tangent bundle and the normal
bundle $TM\oplus T^{\perp }M$ of $M$ by
\begin{equation*}
\left( \tilde{\nabla}_{X}h\right) \left( Y,Z\right) =D_{X}h\left( Y,Z\right)
-h\left( \nabla _{X}Y,Z\right) -h\left( Y,\nabla _{X}Z\right)
\end{equation*}%
for any vector fields $X,$ $Y$ and $Z$ tangent to $M.$ Then we have the
Codazzi equation%
\begin{equation}
\left( \tilde{\nabla}_{X}h\right) \left( Y,Z\right) =\left( \tilde{\nabla}%
_{Y}h\right) \left( X,Z\right)
\end{equation}%
and the Gauss equation is given by
\begin{equation}
\left \langle R(X,Y)Z,W\right \rangle =\left \langle h\left( X,W\right)
,h\left( Y,Z\right) \right \rangle -\left \langle h\left( X,Z\right)
,h\left( Y,W\right) \right \rangle ,
\end{equation}%
where the vectors $X,$ $Y,$ $Z$ and $W$ are tangent to $M$ and $R$ is the
curvature tensor associated with $\nabla $ and the curvature tensor $R$ is
defined by%
\begin{equation*}
R(X,Y)Z=\nabla _{X}\nabla _{Y}Z-\nabla _{Y}\nabla _{X}Z-\nabla _{\left[ X,Y%
\right] }Z.
\end{equation*}%
Let us now define the Gauss map $G$ of a submanifold $M$ into $G(n,m)$ in $%
\wedge ^{n}\mathbb{E}^{m},$ where $G(n,m)$ is the Grassmannian manifold
consisting of all oriented $n-$planes through the origin of $\mathbb{E}^{m}$
and $\wedge ^{n}\mathbb{E}^{m}$ is the vector space obtained by the exterior
product of $n$ vectors in $\mathbb{E}^{m}.$ In a natural way, we can
identify $\wedge ^{n}\mathbb{E}^{m}$ with some Euclidean space $\mathbb{E}%
^{N}$ where $N=\left(
\begin{array}{c}
m \\
n%
\end{array}%
\right) .$ The map $G:M\rightarrow G(n,m)\subset E^{N}$ defined by $%
G(p)=\left( e_{1}\wedge ...\wedge e_{n}\right) \left( p\right) $ is called
the Gauss map of $M,$ that is, a smooth map which carries a point $p$ in $M$
into the oriented $n-$plane through the origin of $\mathbb{E}^{m}$ obtained
from parallel translation of the tangent space of $M$ at $p$ in $.$

Bicomplex number is defined by the basis $\left \{ 1,i,j,ij\right \} $ where
$i,j,ij$ satisfy $i^{2}=-1,$ $j^{2}=-1,$ $ij=ji.$ Thus any bicomplex number $%
x $ can be expressed as $x=x_{1}1+x_{2}i+x_{3}j+x_{4}ij$, $\forall
x_{1},x_{2},x_{3},x_{4}\in \mathbb{R}.$ We denote the set of bicomplex
numbers by $C_{2}.$ For any $x=x_{1}1+x_{2}i+x_{3}j+x_{4}ij$ and $%
y=y_{1}1+y_{2}i+y_{3}j+y_{4}ij$ in $C_{2}$ the bicomplex number addition is
defined by
\begin{equation*}
x+y=\left( x_{1}+y_{1}\right) +\left( x_{2}+y_{2}\right) i+\left(
x_{3}+y_{3}\right) j+\left( x_{4}+y_{4}\right) ij\text{.}
\end{equation*}%
The multiplication of a bicomplex number $x=x_{1}1+x_{2}i+x_{3}j+x_{4}ij$ by
a real scalar $\lambda $ is given by
\begin{equation*}
\lambda x=\lambda x_{1}1+\lambda x_{2}i+\lambda x_{3}j+\lambda x_{4}ij\text{.%
}
\end{equation*}%
With this addition and scalar multiplication, $C_{2}$ is a real vector space.

Bicomplex number product, denoted by $\times $, over the set of bicomplex
numbers $C_{2}$\ is given by
\begin{eqnarray*}
x\times y &=&\left( x_{1}y_{1}-x_{2}y_{2}-x_{3}y_{3}+x_{4}y_{4}\right)
+\left( x_{1}y_{2}+x_{2}y_{1}-x_{3}y_{4}-x_{4}y_{3}\right) i \\
&&+\left( x_{1}y_{3}+x_{3}y_{1}-x_{2}y_{4}-x_{4}y_{2}\right) j+\left(
x_{1}y_{4}+x_{4}y_{1}+x_{2}y_{3}+x_{3}y_{2}\right) ij\text{.}
\end{eqnarray*}%
Vector space $C_{2}$ together with the bicomplex product $\times $ is a real
algebra.

Since the bicomplex algebra is associative, it can be considered in terms of
matrices. Consider the set of matrices%
\begin{equation*}
Q=\left \{ \left(
\begin{array}{cccc}
x_{1} & -x_{2} & -x_{3} & x_{4} \\
x_{2} & x_{1} & -x_{4} & -x_{3} \\
x_{3} & -x_{4} & x_{1} & -x_{2} \\
x_{4} & x_{3} & x_{2} & x_{1}%
\end{array}%
\right) ;\text{ \  \  \  \  \  \ }x_{i}\in \mathbb{R}\text{ ,\  \  \  \ }1\leq i\leq
4\right \} \text{.}
\end{equation*}%
The set $Q$ together with matrix addition and scalar matrix multiplication
is a real vector space. Furthermore, the vector space together with matrix
product is an algebra \cite{yay}.

The transformation
\begin{equation*}
g:C_{2}\rightarrow Q
\end{equation*}%
given by
\begin{equation*}
g\left( x=x_{1}1+x_{2}i+x_{3}j+x_{4}ij\right) =\left(
\begin{array}{cccc}
x_{1} & -x_{2} & -x_{3} & x_{4} \\
x_{2} & x_{1} & -x_{4} & -x_{3} \\
x_{3} & -x_{4} & x_{1} & -x_{2} \\
x_{4} & x_{3} & x_{2} & x_{1}%
\end{array}%
\right)
\end{equation*}%
is one to one and onto. Morever $\forall x,y\in C_{2}$ and $\lambda \in
\mathbb{R},$ we have
\begin{eqnarray*}
g\left( x+y\right) &=&g\left( x\right) +g\left( y\right) \\
g\left( \lambda x\right) &=&\lambda g\left( x\right) \\
g\left( xy\right) &=&g\left( x\right) g\left( y\right) \text{.}
\end{eqnarray*}%
Thus the algebras $C_{2}$ and $Q$ are isomorphic.

Let $x\in C_{2}.$ Then $x$ can be expressed as $x=\left( x_{1}+x_{2}i\right)
+\left( x_{3}+x_{4}i\right) j.$ In this case, there is three different
conjugations for bicomplex numbers as follows:%
\begin{eqnarray*}
x^{t_{1}} &=&\left[ \left( x_{1}+x_{2}i\right) +\left( x_{3}+x_{4}i\right) j%
\right] ^{t_{1}}=\left( x_{1}-x_{2}i\right) +\left( x_{3}-x_{4}i\right) j \\
x^{t_{2}} &=&\left[ \left( x_{1}+x_{2}i\right) +\left( x_{3}+x_{4}i\right) j%
\right] ^{t_{2}}=\left( x_{1}+x_{2}i\right) -\left( x_{3}+x_{4}i\right) j \\
x^{t_{3}} &=&\left[ \left( x_{1}+x_{2}i\right) +\left( x_{3}+x_{4}i\right) j%
\right] ^{t_{3}}=\left( x_{1}-x_{2}i\right) -\left( x_{3}-x_{4}i\right) j
\end{eqnarray*}

\section{Flat Rotation Surfaces with Pointwise 1-Type Gauss Map in $E^{4}$}

In this section, we consider the flat rotation surfaces with pointwise
1-type Gauss map in Euclidean 4- space. Let consider the equation of the
general rotation surface given in \cite{moore}.
\begin{equation*}
\varphi \left( t,s\right) =%
\begin{pmatrix}
\cos mt & -\sin mt & 0 & 0 \\
\sin mt & \cos mt & 0 & 0 \\
0 & 0 & \cos nt & -\sin nt \\
0 & 0 & \sin nt & \cos nt%
\end{pmatrix}%
\left(
\begin{array}{c}
\alpha _{1}(s) \\
\alpha _{2}(s) \\
\alpha _{3}(s) \\
\alpha _{4}(s)%
\end{array}%
\right) ,
\end{equation*}%
where $\alpha \left( s\right) =\left( \alpha _{1}\left( s\right) ,\alpha
_{2}\left( s\right) ,\alpha _{3}\left( s\right) ,\alpha _{4}\left( s\right)
\right) $ is a regular smooth curve in $\mathbb{E}^{4}$ on an open interval $%
I$ in $\mathbb{R}$ and $m$, $n$ are some real numbers which are the rates of
the rotation in fixed planes of the rotation. If we choose the meridian
curve $\alpha $ as $\alpha \left( s\right) =\left( x\left( s\right)
,0,y(s),0\right) $ is unit speed curve and the rates of the rotation $m$ and
$n\ $as $m=n=1,$ we obtain the surface as follows:
\begin{equation}
M:\text{ \ }X\left( s,t\right) =\left( x\left( s\right) \cos t,x\left(
s\right) \sin t,y(s)\cos t,y(s)\sin t\right)
\end{equation}%
Let $M$ be a general rotation surface in $\mathbb{E}^{4}$ given by $(7)$. We
consider the following orthonormal moving frame $\left \{
e_{1},e_{2},e_{3},e_{4}\right \} $ on $M$\ such that $e_{1},e_{2}$ are
tangent to $M$ and $e_{3},e_{4}$ are normal to $M:$
\begin{eqnarray*}
e_{1} &=&\frac{1}{\sqrt{x^{2}\left( s\right) +y^{2}(s)}}\left( -x\left(
s\right) \sin t,x\left( s\right) \cos t,-y(s)\sin t,y(s)\cos t\right) \\
e_{2} &=&\left( x^{\prime }\left( s\right) \cos t,x^{\prime }\left( s\right)
\sin t,y^{\prime }(s)\cos t,y^{\prime }(s)\sin t\right) \\
e_{3} &=&\left( -y^{\prime }(s)\cos t,-y^{\prime }(s)\sin t,x^{\prime
}\left( s\right) \cos t,x^{\prime }\left( s\right) \sin t\right) \\
e_{4} &=&\frac{1}{\sqrt{x^{2}\left( s\right) +y^{2}(s)}}\left( -y(s)\sin
t,y(s)\cos t,x\left( s\right) \sin t,-x\left( s\right) \cos t\right)
\end{eqnarray*}%
where $e_{1}=\frac{1}{\sqrt{x^{2}\left( s\right) +y^{2}(s)}}\frac{\partial }{%
\partial t}$ and $e_{2}=\frac{\partial }{\partial s}$. Then we have the dual
1-forms as:
\begin{equation*}
\omega _{1}=\sqrt{x^{2}\left( s\right) +y^{2}(s)}dt\text{ \  \  \  \ and \  \  \
\ }\omega _{2}=ds
\end{equation*}%
By a direct computation we have components of the second fundamental form
and the connection forms as:%
\begin{equation*}
h_{11}^{3}=b(s),\text{ \ }h_{12}^{3}=0,\text{ \ }h_{22}^{3}=c(s),
\end{equation*}%
\begin{equation*}
h_{11}^{4}=0,\text{ \ }h_{12}^{4}=-b(s),\text{ \ }h_{22}^{4}=0,
\end{equation*}%
\begin{eqnarray*}
\omega _{12} &=&-a(s)\omega _{1},\text{ \  \ }\omega _{13}=b(s)\omega _{1},%
\text{ \  \ }\omega _{14}=-b(s)\omega _{2} \\
\omega _{23} &=&c(s)\omega _{2},\text{ \  \ }\omega _{24}=-b(s)\omega _{1},%
\text{ \  \ }\omega _{34}=-a(s)\omega _{1}.
\end{eqnarray*}%
By covariant differentiation with respect to $e_{1}$ and $e_{2}$ a
straightforward calculation gives:
\begin{eqnarray}
\tilde{\nabla}_{e_{1}}e_{1} &=&-a(s)e_{2}+b(s)e_{3}, \\
\tilde{\nabla}_{e_{2}}e_{1} &=&-b(s)e_{4},  \notag \\
\tilde{\nabla}_{e_{1}}e_{2} &=&a(s)e_{1}-b(s)e_{4},  \notag \\
\tilde{\nabla}_{e_{2}}e_{2} &=&c(s)e_{3},  \notag \\
\tilde{\nabla}_{e_{1}}e_{3} &=&-b(s)e_{1}-a(s)e_{4},  \notag \\
\tilde{\nabla}_{e_{2}}e_{3} &=&-c(s)e_{2},  \notag \\
\tilde{\nabla}_{e_{1}}e_{4} &=&b(s)e_{2}+a(s)e_{3},  \notag \\
\tilde{\nabla}_{e_{2}}e_{4} &=&b(s)e_{1},  \notag
\end{eqnarray}%
where
\begin{equation}
a(s)=\frac{x(s)x^{\prime }(s)+y(s)y^{\prime }(s)}{x^{2}\left( s\right)
+y^{2}(s)},
\end{equation}%
\begin{equation}
\text{\ }b(s)=\frac{x(s)y^{\prime }(s)-x^{\prime }(s)y(s)}{x^{2}\left(
s\right) +y^{2}(s)},
\end{equation}%
\begin{equation}
c(s)=x^{\prime }(s)y^{\prime \prime }-x^{\prime \prime }y^{\prime }(s).
\end{equation}%
The Gaussian curvature is obtained by
\begin{equation}
K=\det \left( h_{ij}^{3}\right) +\det \left( h_{ij}^{4}\right)
=b(s)c(s)-b^{2}(s).
\end{equation}%
If the surface $M$ is flat, from $(12)$ we get
\begin{equation}
b(s)c(s)-b^{2}(s)=0.
\end{equation}%
Furthermore, by using the equations of Gauss and Codazzi after some
computation we obtain%
\begin{equation}
a^{\prime }\left( s\right) +a^{2}\left( s\right) =b^{2}(s)-b(s)c(s)
\end{equation}%
and
\begin{equation}
b^{\prime }\left( s\right) =-2a(s)b(s)+a(s)c(s),
\end{equation}%
respectively.

By using $\left( 4\right) $ and $\left( 8\right) $ and straight-forward
computation the Laplacian $\Delta G$ of the Gauss map $G$ can be expressed as%
\begin{eqnarray}
\Delta G &=&\left( 3b^{2}\left( s\right) +c^{2}\left( s\right) \right)
\left( e_{1}\wedge e_{2}\right) +\left( 2a(s)b(s)-a(s)c(s)-c^{\prime }\left(
s\right) \right) \left( e_{1}\wedge e_{3}\right)  \notag \\
&&+\left( -3a(s)b(s)-b^{\prime }(s)\right) \left( e_{2}\wedge e_{4}\right)
+\left( 2b^{2}(s)-2b(s)c(s)\right) \left( e_{3}\wedge e_{4}\right) .
\end{eqnarray}

\begin{remark}
\label{remark 1}Similar computations to above computations is given for
tensor product surfaces in \cite{arslan3} and for general rotational surface in \cite{dursun1}
\end{remark}

Now we investigate the flat rotation surface with the pointwise 1-type Gauss
map. From $(13)$, we obtain that $b(s)=0$ or $b(s)=c(s).$ We assume that $%
b(s)\neq c(s).$ Then $b(s)$ is equal to zero and $(15)$ implies that $%
a(s)c(s)=0.$ Since $b(s)\neq c(s),$ it implies that $c(s)$ is not equal to
zero. Then we obtain as $a(s)=0.$ In that case, by using $(9)$ and $(10)$ we
obtain that $\alpha \left( s\right) =\left( x\left( s\right)
,0,y(s),0\right) $ is a constant vector. This is a contradiction. Therefore $%
b(s)=c(s)$ for all $s.$ From $(14)$, we get
\begin{equation}
a^{\prime }\left( s\right) +a^{2}\left( s\right) =0
\end{equation}%
whose the trivial solution and non-trivial solution%
\begin{equation*}
a(s)=0
\end{equation*}%
and%
\begin{equation*}
a(s)=\frac{1}{s+c},
\end{equation*}%
respectively. We assume that $a(s)=0.$ By $(15)$ $b=b_{0}$ is a constant and
so is $c$. In that case by using $(9),(10)$ and $(11)$, $x$ and $y$ satisfy
the following differential equations
\begin{equation}
x^{2}\left( s\right) +y^{2}(s)=\lambda ^{2}\text{ \  \ }\lambda \text{ is a
non-zero constant,}
\end{equation}%
\begin{equation}
x(s)y^{\prime }(s)-x^{\prime }(s)y(s)=b_{0}\lambda ^{2},
\end{equation}%
\begin{equation}
x^{\prime }(s)y^{\prime \prime }-x^{\prime \prime }y^{\prime }(s)=b_{0}.
\end{equation}%
From $(18)$ we may put
\begin{equation}
x\left( s\right) =\lambda \cos \theta \left( s\right) ,\text{ \  \ }y\left(
s\right) =\lambda \sin \theta \left( s\right) ,
\end{equation}%
where $\theta \left( s\right) $ is some angle function. Differentiating $(21)
$ with respect to $s,$ we have
\begin{equation}
x^{\prime }(s)=-\theta ^{\prime }(s)y\left( s\right) \text{ \ and \ }%
y^{\prime }(s)=\theta ^{\prime }(s)x\left( s\right) .
\end{equation}%
By substituting $(21)$ and $(22)$ into $(19)$, we get
\begin{equation*}
\theta \left( s\right) =b_{0}s+d\text{, \  \ }d=const.
\end{equation*}%
And since the curve $\alpha $ is a unit speed curve, we have
\begin{equation*}
b_{0}^{2}\lambda ^{2}=1.
\end{equation*}%
Then we can write components of the curve $\alpha $ as:
\begin{equation*}
x\left( s\right) =\lambda \cos \left( b_{0}s+d\right) \text{ \  \ and \  \ }%
y\left( s\right) =\lambda \sin \left( b_{0}s+d\right) ,\text{ \  \  \ }%
b_{0}^{2}\lambda ^{2}=1.
\end{equation*}%
On the other hand, by using $(16)$ we can rewrite the Laplacian of the Gauss
map $G$ with $a(s)=0$ and $b=c=b_{0}$ as follows:%
\begin{equation*}
\Delta G=4b_{0}^{2}\left( e_{1}\wedge e_{2}\right)
\end{equation*}%
that is, the flat surface $M$ is pointwise 1-type Gauss map with the
function $f=4b_{0}^{2}$ and $C=0.$ Even if it is a pointwise 1-type Gauss
map of the first kind.

Now we assume that $a(s)=\frac{1}{s+c}.$ Since $b(s)$ is equal to $c(s),$
from $(15)$ we get
\begin{equation*}
b^{\prime }\left( s\right) =-a(s)b(s)
\end{equation*}%
or we can write%
\begin{equation*}
b^{\prime }\left( s\right) =-\frac{b(s)}{s+c},
\end{equation*}%
whose the solution
\begin{equation*}
b(s)=\mu a(s),\text{ \  \  \ }\mu \text{ is a constant.}
\end{equation*}%
By using $(16)$ we can rewrite the Laplacian of the Gauss map $G$ with $%
c(s)=b(s)=\mu a(s)$ as:%
\begin{equation}
\Delta G=\left( 4\mu ^{2}a^{2}\left( s\right) \right) \left( e_{1}\wedge
e_{2}\right) +2\mu a^{2}(s)\left( e_{1}\wedge e_{3}\right) -2\mu
a^{2}(s)\left( e_{2}\wedge e_{4}\right) .
\end{equation}%
We suppose that the flat rotational surface has pointwise 1-type Gauss map.
From $(1)$ and $(22)$, we get%
\begin{equation}
4\mu ^{2}a^{2}\left( s\right) =f+f\left \langle C,e_{1}\wedge
e_{2}\right \rangle
\end{equation}%
\begin{equation}
2\mu a^{2}(s)=f\left \langle C,e_{1}\wedge e_{3}\right \rangle
\end{equation}%
\begin{equation}
-2\mu a^{2}(s)=f\left \langle C,e_{2}\wedge e_{4}\right \rangle
\end{equation}%
Then, we have%
\begin{equation}
\left \langle C,e_{1}\wedge e_{4}\right \rangle =0,\text{ }\left \langle
C,e_{2}\wedge e_{3}\right \rangle =0,\text{ }\left \langle C,e_{3}\wedge
e_{4}\right \rangle =0
\end{equation}%
By using $(25)$ and $(26)$ we obtain
\begin{equation}
\left \langle C,e_{1}\wedge e_{3}\right \rangle +\left \langle C,e_{2}\wedge
e_{4}\right \rangle =0
\end{equation}%
By differentiating the first equation in $(27)$ with respect to $e_{1}$ and
by using $(8)$, the third equation in $(27)$ and $(28)$, we get
\begin{equation}
2a(s)\left \langle C,e_{1}\wedge e_{3}\right \rangle +\mu a(s)\left \langle
C,e_{1}\wedge e_{2}\right \rangle =0
\end{equation}%
Combining $(24),(25)$ and $(29)$ we then have%
\begin{equation}
f=4\left( a^{2}\left( s\right) +\mu ^{2}a^{2}\left( s\right) \right)
\end{equation}%
that is, a smooth function $f$ depends only on $s.$ By differentiating $f$
with respect to $s$ and by using the equality $a^{\prime }\left( s\right)
=-a^{2}\left( s\right) $, we get%
\begin{equation}
f^{\prime }=-2a(s)f
\end{equation}%
By differentiating $(25)$ with respect to s and by using $(8),(24)$, the
third equation in $(27),(30),(31)$ and the equality $a^{\prime }\left(
s\right) =-a^{2}\left( s\right) $, we have%
\begin{equation*}
\mu a^{3}=0
\end{equation*}%
Since $a(s)\neq 0$, it follows that $\mu =0.$ Then we obtain that $b=c=0.$
Then the surface $M$ is a part of plane.

Thus we can give the following theorem and corollary.

\begin{theorem}
\label{teo 1}Let $M$\ be the flat rotation surface given by the
parametrization (7). Then $M$ has pointwise 1-type Gauss map if and only if $%
M$ is either totally geodesic or parametrized by
\begin{equation}
X\left( s,t\right) =\left(
\begin{array}{c}
\lambda \cos \left( b_{0}s+d\right) \cos t,\lambda \cos \left(
b_{0}s+d\right) \sin t, \\
\lambda \sin \left( b_{0}s+d\right) \cos t,\lambda \sin \left(
b_{0}s+d\right) \sin t%
\end{array}%
\right) \text{,\  \  \ }b_{0}^{2}\lambda ^{2}=1
\end{equation}%
where $b_{0},$ $\lambda $ and $d$ are real constants.
\end{theorem}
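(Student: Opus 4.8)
The plan is to follow the chain of reasoning already laid out in the discussion preceding the theorem statement, organizing it into a clean equivalence argument. The theorem asserts that a flat rotation surface $M$ of the form $(7)$ has pointwise 1-type Gauss map if and only if $M$ is totally geodesic or is the specific parametrized surface $(33)$. One direction is essentially a verification, and the other is the substantive structural analysis; I would handle them in that order.

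First, for the converse ("if"): I would check that each listed surface indeed has pointwise 1-type Gauss map. If $M$ is totally geodesic, then $b=c=0$, all second fundamental form coefficients vanish, and from $(16)$ we get $\Delta G = 0$, which trivially satisfies $(1)$ with $f$ arbitrary (or with $f=0$, $C=0$). For the surface $(33)$, I would observe that it corresponds precisely to the case $a(s)=0$, $b(s)=c(s)=b_0$ with $b_0^2\lambda^2 = 1$: one substitutes $x(s)=\lambda\cos(b_0 s+d)$, $y(s)=\lambda\sin(b_0 s+d)$ into formulas $(9)$, $(10)$, $(11)$ to confirm $a=0$ and $b=c=b_0$, and then $(16)$ reduces to $\Delta G = 4b_0^2(e_1\wedge e_2)$, so $M$ has pointwise 1-type Gauss map (indeed of the first kind) with $f=4b_0^2$ and $C=0$. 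This direction is routine once the frame computations $(8)$--$(16)$ are in hand.

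The forward direction ("only if") is where the real work lies, and I would structure it as a case analysis driven by the flatness condition $(13)$, $b(s)(c(s)-b(s))=0$. The first branch, $b(s)\neq c(s)$ forcing $b\equiv 0$, is quickly eliminated: $(15)$ gives $a(s)c(s)=0$; since $c\not\equiv 0$ (else $b=c$), we get $a\equiv 0$ on an open set, and then $(9)$--$(10)$ force $\alpha$ to be a constant vector, contradicting regularity. So $b\equiv c$, and $(14)$ becomes the Riccati equation $a'+a^2=0$ with solutions $a\equiv 0$ or $a(s)=1/(s+c)$. The case $a\equiv 0$ leads, via $(15)$, to $b=c=b_0$ constant; integrating $(9)$--$(11)$ with the unit-speed condition yields exactly $(33)$. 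The case $a(s)=1/(s+c)$ is the main obstacle: here I must use the pointwise 1-type hypothesis $(1)$ applied to the expression $(23)$ for $\Delta G$, extract the component equations $(24)$--$(26)$ and the orthogonality relations $(27)$--$(29)$, differentiate along $e_1$ using the connection data $(8)$, and combine everything to derive $\mu a^3 = 0$; since $a\neq 0$ this forces $\mu=0$, hence $b=c=0$, so $M$ is planar (totally geodesic).

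The delicate point in that last case is the bookkeeping of inner products $\langle C, e_A\wedge e_B\rangle$ and their derivatives: one must carefully use $\tilde\nabla_{e_1}(e_A\wedge e_B) = (\tilde\nabla_{e_1}e_A)\wedge e_B + e_A\wedge(\tilde\nabla_{e_1}e_B)$ together with the fact that $C$ is a constant vector (so $e_1\langle C, e_A\wedge e_B\rangle = \langle C, \tilde\nabla_{e_1}(e_A\wedge e_B)\rangle$), plus the relation $a'=-a^2$. I would present these as a short sequence of displayed equations mirroring $(24)$--$(31)$, being careful that differentiating $(25)$ a second time and feeding in $(30)$ and $(31)$ is what produces the contradiction. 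Once both branches of the $b\equiv c$ case are resolved — one giving $(33)$, the other giving totally geodesic — the forward direction is complete, and the theorem follows.
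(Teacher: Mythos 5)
Your proposal is correct and follows essentially the same route as the paper: the paper's own argument is precisely the chain of computations preceding the theorem statement (flatness forcing $b\equiv c$, the Riccati equation $a'+a^2=0$, the case $a\equiv 0$ yielding the parametrization $(32)$, and the case $a=1/(s+c)$ yielding $\mu a^3=0$ and hence a totally geodesic surface). Your only additions are the explicit verification of the converse direction and the elimination of the $b\equiv 0$, $c\not\equiv 0$ branch, both of which match the paper's reasoning.
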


\begin{corollary}
\label{cor 1}Let $M$\ be flat rotation surface given by the parametrization
(7). If $M$ has pointwise 1-type Gauss map then the Gauss map $G$ on $M$ is
of 1-type.
\end{corollary}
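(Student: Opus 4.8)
The plan is to deduce the corollary directly from Theorem \ref{teo 1}. Recall that for a surface in $\mathbb{E}^{4}$ the Gauss map is by definition $G=e_{1}\wedge e_{2}$, and that $G$ is said to be of $1$-type precisely when $\Delta G=\lambda\left(G+C\right)$ for a \emph{constant} $\lambda\in\mathbb{R}$ and a constant vector $C$. So the content of the corollary is that for these surfaces the smooth function $f$ occurring in the pointwise $1$-type condition is in fact forced to be constant, which I will verify case by case.

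By Theorem \ref{teo 1}, a flat rotation surface of the form $(7)$ with pointwise $1$-type Gauss map is either totally geodesic or is parametrized by $(33)$. In the first case all coefficients of the second fundamental form vanish, so $b(s)=c(s)=0$, and then $(16)$ gives $\Delta G=0$; thus $G$ is constant and the $1$-type relation $\Delta G=\lambda\left(G+C\right)$ holds trivially (here one may also simply note that $M$ lies in a $2$-plane of $\mathbb{E}^{4}$). In the second case, the derivation leading to Theorem \ref{teo 1} shows $a(s)=0$ and $b(s)=c(s)=b_{0}$ with $b_{0}$ a nonzero constant satisfying $b_{0}^{2}\lambda^{2}=1$; substituting these values into $(16)$ gives $\Delta G=4b_{0}^{2}\left(e_{1}\wedge e_{2}\right)=4b_{0}^{2}\,G$. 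Hence $\Delta G=\lambda\left(G+C\right)$ with the constant $\lambda=4b_{0}^{2}$ and $C=0$, so $G$ is of $1$-type (indeed of the first kind), and the two cases together establish the corollary.

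I do not expect any real obstacle here: the computation has already been carried out in proving Theorem \ref{teo 1}, and the single observation required is that $e_{1}\wedge e_{2}$ is literally the Gauss map $G$, so that the relation $\Delta G=4b_{0}^{2}\left(e_{1}\wedge e_{2}\right)$ already exhibits $G$ as an eigenfunction of $\Delta$ with constant eigenvalue $4b_{0}^{2}$. The only point worth stating carefully is the passage from "pointwise $1$-type'' (the function $f$ in $(1)$) to genuine "$1$-type'', which amounts precisely to recording that $f=4b_{0}^{2}$ (respectively $f\equiv 0$) is constant on $M$.
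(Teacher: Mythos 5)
Your proof is correct and follows essentially the same route as the paper, which derives the corollary from the computation preceding Theorem \ref{teo 1} showing that in the non-planar case $a(s)=0$, $b=c=b_{0}$ constant, and $\Delta G=4b_{0}^{2}\left(e_{1}\wedge e_{2}\right)=4b_{0}^{2}G$, so that $f=4b_{0}^{2}$ is constant and $C=0$. The only slip is a label: the parametrization in Theorem \ref{teo 1} is equation $(32)$, not $(33)$.
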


\section{ The general rotation surface and  Lie group }

In this section, we determine the profile curve of the general rotation
surface which has a group structure with the bicomplex number product.

Let the hyperquadric $P$ be given by%
\begin{equation*}
P=\left \{ x=\left( x_{1},x_{2},x_{3},x_{4}\right) \neq 0\text{; \  \  \ }%
x_{1}x_{4}=x_{2}x_{3}\right \} .
\end{equation*}%
We consider $P$ as the set of bicomplex number%
\begin{equation*}
P=\left \{ x=x_{1}1+x_{2}i+x_{3}j+x_{4}ij\text{ };\text{\ }%
x_{1}x_{4}=x_{2}x_{3},\text{ }x\neq 0\right \} .
\end{equation*}%
The components of $P$ are easily obtained by representing bicomplex number
multiplication in matrix form.%
\begin{equation*}
\tilde{P}=\left \{ M_{x}=\left(
\begin{array}{cccc}
x_{1} & -x_{2} & -x_{3} & x_{4} \\
x_{2} & x_{1} & -x_{4} & -x_{3} \\
x_{3} & -x_{4} & x_{1} & -x_{2} \\
x_{4} & x_{3} & x_{2} & x_{1}%
\end{array}%
\right) ;\text{\ }x_{1}x_{4}=x_{2}x_{3},\text{\ }x\neq 0\right \} .
\end{equation*}

\begin{theorem}
\label{teo 2}The set of $P$ together with the bicomplex number product is a
Lie group
\end{theorem}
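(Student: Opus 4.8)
The plan is to check the four ingredients in the definition of a Lie group for $P$: that $(P,\times)$ is an abstract group, that $P$ is a smooth submanifold of $\mathbb{E}^{4}\cong C_{2}$, and that multiplication and inversion are smooth maps. Because $g:C_{2}\to Q$ is an algebra isomorphism carrying $P$ onto $\tilde{P}$, one may pass freely between the bicomplex picture and the matrix picture; I would use bicomplex numbers for the algebraic part and the matrix $M_{x}$ for questions of invertibility.

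The main step is a structural description of $P$. Introduce the two subalgebras $\mathbb{C}_{i}=\{a+bi:a,b\in\mathbb{R}\}$ and $\mathbb{C}_{j}=\{c+dj:c,d\in\mathbb{R}\}$ of $C_{2}$, each isomorphic to $\mathbb{C}$, which commute because $ij=ji$. A direct computation of the four components of $(p+qi)(c+dj)$ shows they satisfy $x_{1}x_{4}=x_{2}x_{3}$, so $\mathbb{C}_{i}\cdot\mathbb{C}_{j}\subset P\cup\{0\}$; conversely, given $x$ with $x_{1}x_{4}=x_{2}x_{3}$ one exhibits $z\in\mathbb{C}_{i}$ and $w\in\mathbb{C}_{j}$ with $x=zw$ (for instance $z=x_{1}+x_{2}i$ and $w=1+\tfrac{x_{3}}{x_{1}}j$ when $x_{1}\neq0$, the relation $x_{1}x_{4}=x_{2}x_{3}$ guaranteeing that this $w$ also reproduces $x_{4}$; the cases $x_{1}=0$ are similar). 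Hence
\begin{equation*}
P\cup\{0\}=\mathbb{C}_{i}\cdot\mathbb{C}_{j}=\{\,zw:z\in\mathbb{C}_{i},\ w\in\mathbb{C}_{j}\,\}.
\end{equation*}

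Granting this, the group axioms are essentially formal. Since $\mathbb{C}_{i}$ and $\mathbb{C}_{j}$ commute, $(z_{1}w_{1})\times(z_{2}w_{2})=(z_{1}z_{2})(w_{1}w_{2})\in\mathbb{C}_{i}\cdot\mathbb{C}_{j}$; moreover $zw=0$ forces $z=0$ or $w=0$ (compare coordinates, or use $\det g(z)=|z|^{4}$, $\det g(w)=|w|^{4}$), so a product of two nonzero elements of $P$ is again a nonzero element of $\mathbb{C}_{i}\cdot\mathbb{C}_{j}$, i.e.\ of $P$. Thus $P$ is closed under $\times$, and $1=1\cdot1\in P$. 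For inverses, every $x\in P$ is a unit of $C_{2}$: a direct computation gives
\begin{equation*}
\det M_{x}=\Big(x_{1}^{2}+x_{2}^{2}+x_{3}^{2}+x_{4}^{2}\Big)^{2}-4\,(x_{1}x_{4}-x_{2}x_{3})^{2},
\end{equation*}
which on $P$ equals $\big(x_{1}^{2}+x_{2}^{2}+x_{3}^{2}+x_{4}^{2}\big)^{2}>0$ since $x\neq0$; writing $x=zw$ with $z\in\mathbb{C}_{i}\setminus\{0\}$ and $w\in\mathbb{C}_{j}\setminus\{0\}$ gives $x^{-1}=z^{-1}w^{-1}\in\mathbb{C}_{i}\cdot\mathbb{C}_{j}$, again in $P$. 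Associativity is inherited from the associative algebra $C_{2}$.

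Finally, for the differentiable structure: $P$ is the regular level set $F^{-1}(0)\setminus\{0\}$ of $F(x)=x_{1}x_{4}-x_{2}x_{3}$ on $\mathbb{R}^{4}\setminus\{0\}$, since $dF_{x}=(x_{4},-x_{3},-x_{2},x_{1})\neq0$ for $x\neq0$, so $P$ is an embedded three-dimensional submanifold of $\mathbb{E}^{4}$ (and it is connected, being the image of $(R,\phi,\psi)\mapsto Re^{i\phi}e^{j\psi}$, $R>0$). Multiplication $P\times P\to P$ is the restriction of the bicomplex product $\times$ on $C_{2}$, which is bilinear, hence smooth; inversion $P\to P$ is the restriction of $x\mapsto x^{-1}$, which by the determinant formula above is smooth on the open set of units of $C_{2}$ containing $P$. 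Hence $(P,\times)$ is a Lie group (abelian, since $C_{2}$ is commutative). The step I expect to require the most care is establishing $P\cup\{0\}=\mathbb{C}_{i}\cdot\mathbb{C}_{j}$, especially the inclusion $P\cup\{0\}\subset\mathbb{C}_{i}\cdot\mathbb{C}_{j}$ with its coordinate case analysis; once $P$ is recognized as the set of products of these two commuting complex lines, closure, inverses, and the whole theorem become routine.
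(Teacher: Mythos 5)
Your argument is correct, and it takes a genuinely different (and far more self-contained) route than the paper. The paper's proof is essentially a deferral: it passes to the matrix set $\tilde{P}$ via the isomorphism $g$, asserts that $\tilde{P}$ is a differentiable manifold and a group under matrix multiplication with differentiable group operation, and cites \cite{yay}; no verification of closure, invertibility, or the submanifold property appears in the text. Your key lemma --- the factorization $P\cup\{0\}=\mathbb{C}_{i}\cdot\mathbb{C}_{j}$ with the two commuting complex subalgebras --- does not occur in the paper at all, and it is exactly the right structural observation: since $C_{2}$ has zero divisors (e.g. $(1+ij)(1-ij)=0$), closure of $P$ under the product and invertibility of every element of $P$ are the genuinely non-obvious points, and your decomposition settles both at once (the determinant identity $\det M_{x}=\bigl(x_{1}^{2}+x_{2}^{2}+x_{3}^{2}+x_{4}^{2}\bigr)^{2}-4\bigl(x_{1}x_{4}-x_{2}x_{3}\bigr)^{2}$ checks out, as does the regular-value argument for $F(x)=x_{1}x_{4}-x_{2}x_{3}$ on $\mathbb{R}^{4}\setminus\{0\}$). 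What the paper's approach buys is brevity and consistency with the reference \cite{yay}, where the relevant facts about $Q$ and $\tilde P$ are developed; what yours buys is a complete, verifiable proof plus extra structural information (that $P$ is the set of nonzero products $zw$, $z\in\mathbb{C}_{i}$, $w\in\mathbb{C}_{j}$, hence abelian and connected), which in fact illuminates the later parametrization $Re^{i\phi}e^{j\psi}$ implicit in the surfaces of Section 4.
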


\begin{proof}
$\tilde{P}$ is a differentiable manifold and at the same time a group with
group operation given by matrix multiplication. The group function%
\begin{equation*}
.:\tilde{P}\times \tilde{P}\rightarrow \tilde{P}
\end{equation*}%
defined by $\left( x,y\right) \rightarrow x.y$ is differentiable. So $(P,.)$
can be made a Lie group so that $g$ is a isomorphism \cite{yay}.
\end{proof}

\begin{remark}
\label{remark 2}The surface $M$ given by the parametrization (7) is a subset
of $P.$
\end{remark}

\begin{proposition}
\label{pro 1}Let $M$\ be a rotation surface given by the parametrization (7).
If $x(s)$ and $y(s)$ satisfy the following equations then $M$\ is a Lie
subgroup of $P$.%
\begin{equation}
x\left( s_{1}\right) x\left( s_{2}\right) -y\left( s_{1}\right) y\left(
s_{2}\right) =x\left( s_{1}+s_{2}\right)
\end{equation}%
\begin{equation}
x\left( s_{1}\right) y\left( s_{2}\right) +x\left( s_{2}\right) y\left(
s_{1}\right) =y\left( s_{1}+s_{2}\right)
\end{equation}%
\begin{equation}
\frac{x\left( s\right) }{x^{2}\left( s\right) +y^{2}\left( s\right) }%
=x\left( -s\right)
\end{equation}%
\begin{equation}
-\frac{y\left( s\right) }{x^{2}\left( s\right) +y^{2}\left( s\right) }%
=y\left( -s\right)
\end{equation}
\end{proposition}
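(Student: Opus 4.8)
The goal is to show that under the four functional equations (35)–(38), the rotation surface $M$ from (7) is closed under the bicomplex product and under bicomplex inversion, hence a subgroup of $P$; since $M$ is a submanifold of the Lie group $P$ (Remark \ref{remark 2} together with Theorem \ref{teo 2}) and the group operations restrict to smooth maps on $M$, it is then a Lie subgroup. The natural approach is to compute the bicomplex product of two arbitrary points $X(s_1,t_1)$ and $X(s_2,t_2)$ of $M$ directly and verify it lands back in $M$, then do the same for the inverse of a single point.

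\textbf{Step 1: Set up the product computation.} First I would identify the point $X(s,t)=(x(s)\cos t,\,x(s)\sin t,\,y(s)\cos t,\,y(s)\sin t)$ with the bicomplex number
\begin{equation*}
X(s,t)=x(s)\cos t\cdot 1+x(s)\sin t\cdot i+y(s)\cos t\cdot j+y(s)\sin t\cdot ij.
\end{equation*}
I would then write $X(s_1,t_1)\times X(s_2,t_2)$ using the multiplication table for $\{1,i,j,ij\}$ given in the Preliminaries, collecting the four real components. Each component will be a sum of products of $x,y$ evaluated at $s_1,s_2$ times products of $\cos,\sin$ evaluated at $t_1,t_2$. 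The expectation — which is what (35)–(36) are built to exploit — is that after applying the product-to-sum trigonometric identities (so that $\cos t_1\cos t_2-\sin t_1\sin t_2=\cos(t_1+t_2)$, etc.) and then substituting the addition formulas (35) for $x$ and (36) for $y$, the four components collapse exactly into
\begin{equation*}
X(s_1+s_2,\,t_1+t_2)=\bigl(x(s_1+s_2)\cos(t_1+t_2),\ldots\bigr),
\end{equation*}
which is again a point of $M$. This shows $M$ is closed under the bicomplex product.

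\textbf{Step 2: Handle the inverse.} Next I would compute the bicomplex inverse of $X(s,t)$. Using the matrix representation $g$ (or directly the formula for the inverse in $C_2$), the inverse of a bicomplex number $x=x_1+x_2i+x_3j+x_4ij$ lying in $P$ has components that are rational in the $x_A$, with denominator essentially $x_1^2+x_2^2+x_3^2+x_4^2 = x^2(s)+y^2(s)$ for our point (the mixed terms vanish on $M$ since $x_1x_4=x_2x_3$ forces the cross terms to cancel). Carrying this out, I expect the inverse of $X(s,t)$ to come out as
\begin{equation*}
\left(\frac{x(s)\cos t}{x^2+y^2},\ -\frac{x(s)\sin t}{x^2+y^2},\ -\frac{y(s)\cos t}{x^2+y^2},\ \frac{y(s)\sin t}{x^2+y^2}\right),
\end{equation*}
i.e. the point with angle $-t$ and radial functions $\frac{x(s)}{x^2+y^2}$, $-\frac{y(s)}{x^2+y^2}$; equations (37)–(38) say precisely that these equal $x(-s)$ and $y(-s)$, so the inverse is $X(-s,-t)\in M$. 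Also, $X(0,0)=(x(0),0,y(0),0)$, and setting $s_1=s_2=0$ in (35)–(36) forces $x(0)=x(0)^2-y(0)^2$, $y(0)=2x(0)y(0)$, while the identity of $P$ (as a bicomplex number, $1$) should be recovered consistently; I would note the identity element lies on $M$.

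\textbf{Step 3: Conclude Lie subgroup structure.} Having shown $M$ is an abstract subgroup of $(P,\times)$, and since $M$ is an embedded submanifold of $P$ on which the multiplication and inversion maps of $P$ restrict, $M$ is a Lie subgroup. The main obstacle is purely computational bookkeeping in Step 1: keeping the sixteen trigonometric cross-terms organized and recognizing which combinations are $\cos(t_1+t_2)$ versus $\sin(t_1+t_2)$, and then matching the resulting $x,y$-combinations to the left-hand sides of (35)–(36). There is no conceptual difficulty — the four hypotheses are exactly the four coordinate conditions needed — but care is required to confirm that the \emph{same} pairing of trig identities and addition formulas works simultaneously in all four components, and that no sign is off in the $ij$-component (which involves $x_1y_4+x_4y_1+x_2y_3+x_3y_2$ and is the one most prone to error).
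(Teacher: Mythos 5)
Your proposal is correct and follows essentially the same route as the paper's (very terse) proof: verify via the bicomplex multiplication table and the trigonometric addition formulas that $X(s_{1},t_{1})\times X(s_{2},t_{2})=X(s_{1}+s_{2},t_{1}+t_{2})$ using the two addition-type hypotheses, and that the bicomplex inverse of $X(s,t)$ is $X(-s,-t)$ using the two inversion-type hypotheses, whence $M$ is a subgroup and, being a submanifold of the Lie group $P$, a Lie subgroup. Your extra checks (the identity element $X(0,0)=1$ and the sign bookkeeping in the $ij$-component) are consistent with, and merely flesh out, what the paper asserts without computation.
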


\begin{proof}
Let $\alpha (s)=\left( x(s),0,y(s),0\right) $ be a profile curve of the
rotation surface given by the parametrization (7) such that $x(s)$ and $y(s)$
satisfy the equations $\left( 33\right) ,$ $\left( 34\right) ,$ $\left(
35\right) $ and $\left( 36\right) $. In that case we obtain that the inverse
of $X\left( s,t\right) $ is $X\left( -s,-t\right) $ and $X\left(
s_{1},t_{1}\right) \times X\left( s_{2},t_{2}\right) =X\left(
s_{1}+s_{2},t_{1}+t_{2}\right) .$ This completes the proof.
\end{proof}

\begin{proposition}
\label{pro 2}Let $\alpha (s)=\left( x(s),0,y(s),0\right) $ be a profile curve
of the rotation surface given by the parametrization (7) such that $x(s)$
and $y(s)$ satisfy the equation $x^{2}\left( s\right) +y^{2}\left( s\right)
=\lambda ^{2},$ where $\lambda $ is a non-zero constant. If $M$\ is a
subgroup of $P$ then the profile curve $\alpha $ is a unit circle.
\end{proposition}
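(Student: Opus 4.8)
The plan is to use the group-structure hypothesis on $M$ together with the constraint $x^2(s)+y^2(s)=\lambda^2$ to pin down $x(s)$ and $y(s)$ explicitly as trigonometric functions, which will exhibit $\alpha$ as a circle of radius $\lambda$. First I would invoke Proposition~\ref{pro 1}: if $M$ is a subgroup of $P$, then in particular the addition formulas $(33)$ and $(34)$ must hold for the profile curve, and the inversion formulas $(35)$–$(36)$ must hold as well. Combining $(35)$–$(36)$ with $x^2(s)+y^2(s)=\lambda^2$ gives $x(-s)=x(s)/\lambda^2$ and $y(-s)=-y(s)/\lambda^2$, so that $x$ is almost-even and $y$ almost-odd up to the constant $1/\lambda^2$. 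Evaluating the addition formulas at $s_1=s_2=0$ forces $x(0)=x(0)^2-y(0)^2$ and $y(0)=2x(0)y(0)$; since $x(0)^2+y(0)^2=\lambda^2\neq 0$ this system has $x(0)=\lambda^2$-type solutions, but the second equation gives either $y(0)=0$ or $x(0)=1/2$, and cross-checking with the first equation and the norm constraint will force $x(0)=1$, $y(0)=0$ (after noting $\lambda^2$ must then equal $1$, or handling the scaling carefully).

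Next I would differentiate the addition formulas $(33)$–$(34)$ with respect to $s_1$ and then set $s_1=0$, producing a linear first-order ODE system for $(x,y)$:
\begin{align*}
x'(s) &= x'(0)\,x(s) - y'(0)\,y(s),\\
y'(s) &= y'(0)\,x(s) + x'(0)\,y(s).
\end{align*}
Differentiating $x^2(s)+y^2(s)=\lambda^2$ gives $x x' + y y' = 0$ for all $s$; substituting the ODE system and using that $(x,y)$ is not identically zero yields $x'(0)=0$. Writing $\omega := y'(0)$, the system collapses to $x'=-\omega y$, $y'=\omega x$, whose solution with the initial data $x(0),y(0)$ above is $x(s)=\lambda\cos(\omega s)$, $y(s)=\lambda\sin(\omega s)$ (up to the sign/normalization of the initial point settled in the previous step). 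This is precisely a parametrization of the circle of radius $\lambda$ in the $x_1x_3$-plane, so $\alpha$ is a unit circle in the sense that its image is a round circle.

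The main obstacle I anticipate is the bookkeeping at $s=0$: the inversion formulas $(35)$–$(36)$ are only consistent with the norm constraint when $\lambda^2=1$, so strictly the hypothesis $x^2+y^2=\lambda^2$ together with $M$ being a \emph{subgroup} (hence containing the identity and closed under inverse) should already force $\lambda=1$; I would need to check that the identity element of $P$ restricted to $M$ corresponds to $s=0$ with $(x(0),y(0))=(1,0)$, and handle the possibility that the group parameter is a reparametrization of $s$ rather than $s$ itself. Once the initial conditions are correctly identified, the ODE argument is routine and the conclusion that $\alpha$ traces a unit circle follows immediately. An alternative, slightly slicker route avoiding ODEs: observe that $(33)$–$(34)$ say exactly that $s\mapsto x(s)+y(s)\,\mathfrak{i}$ is a one-parameter subgroup of the unit complex numbers (since the norm is the constant $\lambda$, forced to be $1$), and every continuous one-parameter subgroup of the circle group is $s\mapsto e^{\mathrm{i}\omega s}$; I would mention this as the conceptual reason but carry out the elementary ODE version for self-containedness.
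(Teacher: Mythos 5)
Your main line of argument has a gap: you invoke the addition formulas $(33)$--$(34)$ and the inversion formulas $(35)$--$(36)$ as \emph{consequences} of $M$ being a subgroup, but Proposition \ref{pro 1} only asserts the other direction (these identities are \emph{sufficient} for $M$ to be a subgroup). From the bare hypothesis that $M$ is a subgroup you only know that $X(s_1,t_1)\times X(s_2,t_2)$ and $X(s,t)^{-1}$ again lie on $M$, i.e.\ equal $X(\sigma,\tau)$ for \emph{some} parameters $(\sigma,\tau)$ --- not necessarily $(s_1+s_2,t_1+t_2)$ and $(-s,-t)$. You flag this reparametrization issue yourself at the end, but your ODE argument (differentiating the addition law in $s_1$, setting $s_1=0$, and reading off initial data at $s=0$) genuinely depends on $s$ being a homomorphism parameter with identity at $s=0$, so the gap is not cosmetic. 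The route also proves far more than is needed: since $x^2+y^2=\lambda^2$ is hypothesized, the image of $\alpha$ already lies on a circle of radius $\left\vert \lambda\right\vert$, and the only thing left to establish is $\lambda^2=1$.

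The observation you relegate to an aside --- that the inversion formulas are inconsistent with the norm constraint unless $\lambda^2=1$ --- is in fact the paper's entire proof, and it is immune to the reparametrization problem. The paper computes the bicomplex inverse of $X(s,t)$ explicitly and uses only closure under inversion to write $x(s)/\bigl(x^{2}(s)+y^{2}(s)\bigr)=x\bigl(f(s)\bigr)$ and $-y(s)/\bigl(x^{2}(s)+y^{2}(s)\bigr)=y\bigl(f(s)\bigr)$ for some unspecified smooth $f$ (no claim that $f(s)=-s$); this gives $x(s)=\lambda^{2}x\bigl(f(s)\bigr)$ and $y(s)=-\lambda^{2}y\bigl(f(s)\bigr)$, and summing the squares yields $\lambda^{2}=\lambda^{4}\cdot\lambda^{2}$, hence $\lambda^{2}=1$. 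You should either promote that one-line computation to be the proof and discard the ODE machinery, or first establish the converse of Proposition \ref{pro 1} before using $(33)$--$(34)$.
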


\begin{proof}
We assume that $x(s)$ and $y(s)$ satisfy the equation $x^{2}\left( s\right)
+y^{2}\left( s\right) =\lambda ^{2}.$ Then we can put%
\begin{equation}
x(s)=\lambda \cos \theta \left( s\right) \text{ and }y(s)=\lambda \sin
\theta \left( s\right)
\end{equation}%
where $\lambda $ is a real constant and $\theta \left( s\right) $ is a
smooth function. Since $M$\ is a group, there exists one and only inverse of
all elements on $M.$ In that case the inverse of $X\left( s,t\right) $ is
given by%
\begin{equation*}
X^{-1}\left( s,t\right) =\left(
\begin{array}{c}
\frac{x\left( s\right) }{x^{2}\left( s\right) +y^{2}\left( s\right) }\cos
\left( -t\right) ,\frac{x\left( s\right) }{x^{2}\left( s\right) +y^{2}\left(
s\right) }\sin \left( -t\right) , \\
-\frac{y\left( s\right) }{x^{2}\left( s\right) +y^{2}\left( s\right) }\cos
\left( -t\right) ,-\frac{y\left( s\right) }{x^{2}\left( s\right)
+y^{2}\left( s\right) }\sin \left( -t\right)%
\end{array}%
\right)
\end{equation*}%
where
\begin{eqnarray}
\frac{x\left( s\right) }{x^{2}\left( s\right) +y^{2}\left( s\right) }
&=&x\left( f\left( s\right) \right) ,\text{ }  \notag \\
-\frac{y\left( s\right) }{x^{2}\left( s\right) +y^{2}\left( s\right) }
&=&y\left( f\left( s\right) \right) \text{, }f\text{ is a smooth function.}
\end{eqnarray}%
By using $(38)$, we get
\begin{equation}
x\left( s\right) =\lambda ^{2}x\left( f\left( s\right) \right)
\end{equation}%
and%
\begin{equation}
y\left( s\right) =-\lambda ^{2}y\left( f\left( s\right) \right)
\end{equation}%
By summing of the squares on both sides in $(39)$ and $(40)$ and by using $%
(37)$, we obtain that $\lambda ^{2}=1.$ This completes the proof.
\end{proof}

\begin{proposition}
\label{pro 3}Let $\alpha (s)=\left( x(s),0,y(s),0\right) $ be a profile curve
of the rotation surface given by the parametrization (7) such that $x(s)$
and $y(s)$ is given by $x(s)=\lambda \cos \theta \left( s\right) $ and $%
y(s)=\lambda \sin \theta \left( s\right) .$ Then if $\lambda =1$ and $\theta
$ is a linear function then $M$ is a Lie subgroup of $P$.
\end{proposition}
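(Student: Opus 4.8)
The plan is to reduce the claim to Proposition \ref{pro 1}: it is enough to check that the coordinate functions $x(s)=\lambda\cos\theta(s)$ and $y(s)=\lambda\sin\theta(s)$ satisfy the four functional equations $(33)$--$(36)$ under the hypotheses $\lambda=1$ and $\theta$ linear. I would write the linear function as $\theta(s)=\beta s$ for a real constant $\beta$; normalizing the intercept to zero costs nothing, because the inverse relations $(35)$--$(36)$ force $\theta$ to be an odd function and relation $(33)$ forces the intercept to be a multiple of $2\pi$, so up to a full period one may as well take $\theta(s)=\beta s$. With $\lambda=1$ we then have $x(s)=\cos\beta s$, $y(s)=\sin\beta s$, and in particular $x^{2}(s)+y^{2}(s)=1$.

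First I would dispatch the inverse relations $(35)$ and $(36)$. Since $x^{2}(s)+y^{2}(s)=1$, their left-hand sides reduce to $x(s)=\cos\beta s$ and $-y(s)=-\sin\beta s$, while the right-hand sides are $x(-s)=\cos(-\beta s)$ and $y(-s)=\sin(-\beta s)$; the evenness of cosine and the oddness of sine make both identities immediate. This is the only place where the hypothesis $\lambda=1$ is actually used.

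Next I would verify the product relations $(33)$ and $(34)$. Expanding the left-hand sides and applying the cosine and sine addition formulas gives $x(s_{1})x(s_{2})-y(s_{1})y(s_{2})=\cos(\beta s_{1}+\beta s_{2})$ and $x(s_{1})y(s_{2})+x(s_{2})y(s_{1})=\sin(\beta s_{1}+\beta s_{2})$. Linearity of $\theta$ yields $\beta s_{1}+\beta s_{2}=\beta(s_{1}+s_{2})=\theta(s_{1}+s_{2})$, so these two expressions are exactly $x(s_{1}+s_{2})$ and $y(s_{1}+s_{2})$, which is what $(33)$ and $(34)$ assert. All four hypotheses of Proposition \ref{pro 1} now hold, so $M$ is a Lie subgroup of $P$.

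I do not expect a genuine obstacle: the whole argument is a substitution into $(33)$--$(36)$ together with elementary trigonometry. The only point I would be careful to record is that the group structure really does pin down the intercept of $\theta$, as noted above. Conceptually, the reason everything closes up is that in the bicomplex picture the profile curve is $x(s)+y(s)j=\cos\beta s+j\sin\beta s$, a one-parameter subgroup of $P$ lying in the copy of $\mathbb{C}$ spanned by $1$ and $j$, and $X(s,t)=e^{it}\times\bigl(x(s)+y(s)j\bigr)$; since $i$ and $j$ commute, $M$ coincides with the abelian two-parameter subgroup $\{\,e^{it}\times e^{j\beta s}:s,t\in\mathbb{R}\,\}$, a torus sitting inside $P$, so Proposition \ref{pro 1} is simply the coordinate shadow of this observation.
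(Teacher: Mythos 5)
Your proof is correct and follows essentially the same route as the paper: take $\theta(s)=\beta s$, check the functional equations $(33)$--$(36)$ via the angle-addition formulas and the parity of sine and cosine, and invoke Proposition \ref{pro 1}. The only differences are that you spell out the trigonometric verification the paper merely asserts, and that you correctly reduce to Proposition \ref{pro 1} where the paper's proof cites Proposition \ref{pro 2} (evidently a typo, since Proposition \ref{pro 2} runs in the opposite direction).
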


\begin{proof}
We assume that $\lambda =1$ and $\theta $ is a linear function. Then we can
write
\begin{equation*}
x(s)=\cos \eta s\text{ and }y(s)=\sin \eta s
\end{equation*}%
and in that case $x(s)$ and $y(s)$ satisfy the equations $\left( 33\right) ,$
$\left( 34\right) ,$ $\left( 35\right) $ and $\left( 36\right) .$ Thus from Proposition ($\ref{pro 2}$)
$M$ is a subgroup of $P.$ Also, it is a submanifold of $P.$
\end{proof}

\begin{proposition}
\label{pro 4}Let $\alpha (s)=\left( x(s),0,y(s),0\right) $ be a profile curve
of the rotation surface given by the parametrization (7) such that $x(s)$
and $y(s)$ is given by $x(s)=u(s)\cos \theta \left( s\right) $ and $%
y(s)=u(s)\sin \theta \left( s\right) .$ Then if $u:$ $\left( \mathbb{R}%
,+\right) \rightarrow \left( \mathbb{R}^{+},.\right) $ is a group
homomorphism and $\theta $ is a linear function then $M$ is a Lie subgroup
of $P$.
\end{proposition}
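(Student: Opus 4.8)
The plan is to deduce this from Proposition~\ref{pro 1}: it suffices to check that, under the stated hypotheses, the coordinate functions $x(s)=u(s)\cos\theta(s)$ and $y(s)=u(s)\sin\theta(s)$ satisfy the four functional equations $(33)$, $(34)$, $(35)$, $(36)$, and then invoke that proposition directly. First I would extract the algebraic content of the hypotheses. Since $u\colon(\mathbb{R},+)\to(\mathbb{R}^{+},\cdot)$ is a group homomorphism we have $u(s_{1}+s_{2})=u(s_{1})u(s_{2})$, $u(0)=1$ and $u(-s)=u(s)^{-1}$; moreover $u=\sqrt{x^{2}+y^{2}}$ is smooth and positive, so $u$ is a continuous homomorphism and in fact $u(s)=e^{ks}$ for some constant $k$, although only the three algebraic identities just listed are actually needed. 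Writing the linear function as $\theta(s)=\eta s$ (note that $X(0,0)$ must be the group identity, which forces $\theta(0)=0$, so the relevant notion here is homogeneous linear), we get $\theta(s_{1})+\theta(s_{2})=\theta(s_{1}+s_{2})$ and $\theta(-s)=-\theta(s)$. Finally, $x^{2}(s)+y^{2}(s)=u^{2}(s)$.

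Next I would verify the four identities by direct substitution. For $(33)$ and $(34)$, expand the left-hand sides, factor out $u(s_{1})u(s_{2})=u(s_{1}+s_{2})$, and recognise the remaining trigonometric brackets as $\cos(\theta(s_{1})+\theta(s_{2}))=\cos\theta(s_{1}+s_{2})$ and $\sin(\theta(s_{1})+\theta(s_{2}))=\sin\theta(s_{1}+s_{2})$ by the addition formulas together with additivity of $\theta$; this yields $x(s_{1}+s_{2})$ and $y(s_{1}+s_{2})$ respectively. For $(35)$ and $(36)$, dividing $x(s)$ and $y(s)$ by $x^{2}(s)+y^{2}(s)=u^{2}(s)$ gives $\cos\theta(s)/u(s)$ and $-\sin\theta(s)/u(s)$, while on the other side $x(-s)=u(-s)\cos\theta(-s)=u(s)^{-1}\cos\theta(s)$ and $y(-s)=u(-s)\sin\theta(-s)=-u(s)^{-1}\sin\theta(s)$, using $u(-s)=u(s)^{-1}$ and the parity of cosine and sine. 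Hence $(33)$–$(36)$ all hold.

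With $(33)$–$(36)$ in hand, Proposition~\ref{pro 1} applies verbatim and gives that $M$ is a Lie subgroup of $P$; in passing one checks, exactly as in Proposition~\ref{pro 3}, that $\alpha$ is a regular curve (here $x'^{2}+y'^{2}=u'^{2}+u^{2}\eta^{2}$, which can vanish only in the degenerate constant case) and that $M$ is genuinely a submanifold of $P$. I do not expect a real obstacle here: the argument is a substitution into already-established functional equations, and the only point needing care is the interpretation of ``$\theta$ is linear,'' which must be homogeneous linear so that the parametrization has a well-behaved identity element. The statement is the natural generalization of Proposition~\ref{pro 3}, which is recovered by taking $u$ to be the trivial homomorphism $u\equiv 1$.
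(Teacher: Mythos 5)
Your proof is correct and takes essentially the same route as the paper's: check that $x=u\cos\theta$, $y=u\sin\theta$ satisfy the functional equations (33)--(36) and then invoke Proposition~\ref{pro 1}. You additionally supply the substitution details and the observation that ``linear'' must mean homogeneous linear (so that $\theta(0)=0$ and $X(0,0)$ is the identity), points the paper leaves implicit.
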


\begin{proof}
Let $x(s)$ and $y(s)$ be given by $x(s)=u(s)\cos \theta \left( s\right) $
and $y(s)=u(s)\sin \theta \left( s\right) $ and let $u:$ $\left( \mathbb{R}%
,+\right) \rightarrow \left( \mathbb{R}^{+},.\right) $ be a group
homomorphism and $\theta $ be a linear function. In that case $x(s)$ and $%
y(s)$ satisfy the equations $\left( 33\right) ,$ $\left( 34\right) ,$ $%
\left( 35\right) $ and $\left( 36\right) .$ Thus from Proposition ($\ref{pro 1}$) $M$ is
a subgroup of $P.$ Also, it is a submanifold of $P.$ So it is a Lie subgroup
of $P.$
\end{proof}

\begin{corollary}
\label{cor 2}Let $\alpha (s)=\left( x(s),0,y(s),0\right) $ be a profile curve
of the rotation surface given by the parametrization (7) such that $x(s)$
and $y(s)$ is given by $x(s)=\lambda \cos \theta \left( s\right) $ and $%
y(s)=\lambda \sin \theta \left( s\right) $ for $\theta $ linear function. If
$M$ is a Lie subgroup then $\lambda =1$.
\end{corollary}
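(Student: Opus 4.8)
The plan is to obtain this as an immediate consequence of Proposition~\ref{pro 2}, of which it is essentially the ``$\lambda=1$'' reading under the extra structural hypothesis that $\theta$ is linear (so it is the converse companion of Proposition~\ref{pro 3}). First I would observe that the hypotheses here are a special instance of the standing assumption of Proposition~\ref{pro 2}: substituting $x(s)=\lambda\cos\theta(s)$ and $y(s)=\lambda\sin\theta(s)$ gives at once
\begin{equation*}
x^{2}(s)+y^{2}(s)=\lambda^{2},
\end{equation*}
and $\lambda$ is a non-zero constant because $\alpha$ is a regular curve (for $\lambda=0$ the profile curve degenerates to a point and $(7)$ fails to define a surface). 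Hence the constant-modulus hypothesis required by Proposition~\ref{pro 2} holds automatically.

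Next, since $M$ is assumed to be a Lie subgroup of $P$, it is in particular a subgroup of $P$, so Proposition~\ref{pro 2} applies directly and tells us that the profile curve $\alpha$ is a unit circle; in the language of its proof, the existence of inverses in the group yields $x(s)=\lambda^{2}x(f(s))$ and $y(s)=-\lambda^{2}y(f(s))$, and summing the squares together with $x^{2}(s)+y^{2}(s)=\lambda^{2}$ forces $\lambda^{2}=1$. From $\lambda^{2}=1$ and the convention that $\lambda>0$ (the value $\lambda=-1$ produces the same surface after the reparametrization $\theta\mapsto\theta+\pi$, so there is no loss of generality) we conclude $\lambda=1$.

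There is no genuine obstacle in this argument: the only point to verify is that the hypothesis $x^{2}(s)+y^{2}(s)=\lambda^{2}$ of Proposition~\ref{pro 2} is met here, after which the conclusion is immediate. If one prefers a self-contained proof, one can instead repeat the short computation of Proposition~\ref{pro 2} verbatim using the explicit expressions $x(s)=\lambda\cos\theta(s)$, $y(s)=\lambda\sin\theta(s)$ with $\theta$ linear, but citing the proposition is cleaner and avoids duplicating the inverse-element computation.
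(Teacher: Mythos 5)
Your reduction to Proposition~\ref{pro 2} is fine as far as it goes: the hypothesis $x^{2}(s)+y^{2}(s)=\lambda^{2}$ is indeed automatic here, and invoking the inverse-element computation of that proposition to get $\lambda^{2}=1$ matches what the paper does (the paper phrases it as: assume $\lambda\neq 1$ and deduce $\lambda=-1$ from the group hypothesis). The gap is in your final step, where you dispose of $\lambda=-1$ by appealing to ``the convention that $\lambda>0$'' and the observation that $\lambda=-1$ yields the same surface after $\theta\mapsto\theta+\pi$. No such convention is among the hypotheses of the corollary, and the whole content of the statement beyond Proposition~\ref{pro 2} is precisely to exclude $\lambda=-1$; declaring it away is not a proof. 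Worse, your symmetry argument actually undercuts the statement: if $\lambda=-1$ gave the same Lie subgroup as $\lambda=1$, then ``$M$ is a Lie subgroup'' would hold with $\lambda=-1\neq 1$ and the corollary as written would be false. Note also that the reparametrization $\theta\mapsto\theta+\pi$ destroys the hypothesis that $\theta$ is linear (it becomes affine with a nonzero constant term), so the two presentations are not interchangeable within the setting of the corollary.

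The paper closes this case with an actual argument: for $\lambda=-1$ and $\theta(s)=\eta s$ linear, the closure property in the sense of Proposition~\ref{pro 1} fails, because
\begin{equation*}
x(s_{1})x(s_{2})-y(s_{1})y(s_{2})=\cos\left(\eta(s_{1}+s_{2})\right)=-x(s_{1}+s_{2}),
\end{equation*}
so equation $(33)$ is violated and $X(s_{1},t_{1})\times X(s_{2},t_{2})\neq X(s_{1}+s_{2},t_{1}+t_{2})$; this contradicts the group structure assumed on $M$ and forces $\lambda=1$. You need to supply this (or an equivalent) computation rather than a normalization convention.
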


\begin{proof}
We assume that $M$ is a group and $\lambda \neq 1.$ From Proposition ($\ref{pro 1}$) we
obtain that $\lambda =-1.$ On the other hand, for $\lambda =-1$ and $\theta $
linear function the closure property is not satisfied on $M.$ This is a
contradiction. Then we obtain that $\lambda =1$.
\end{proof}

\begin{remark}
\label{remark 3}Let $M$ be a Vranceanu surface. If the surface $M$ is flat
then it is given by
\begin{equation*}
X\left( s,t\right) =\left( e^{ks}\cos s\cos t,e^{ks}\cos s\sin t,e^{ks}\sin
s\cos t,e^{ks}\sin s\sin t\right)
\end{equation*}%
where $k$ is a real constant. In that case we can say that flat Vranceanu
surface is a Lie subgroup of $P$ with bicomplex multiplication. Also, flat
Vranceanu surface with pointwise 1-type Gauss map is Clifford torus and it
is given by%
\begin{equation*}
X\left( s,t\right) =\left( \cos s\cos t,\cos s\sin t,\sin s\cos t\sin s\sin
t\right)
\end{equation*}%
and Clifford Torus is a Lie subgroup of $P$ with bicomplex multiplication.
See for more details \cite{ak}.
\end{remark}

\begin{theorem}
\label{teo 3}Let $M$\ be non-planar flat rotation surface with pointwise
1-type Gauss map given by the parametrization (32) with $d=2k\pi $. Then $M$
is a Lie group with bicomplex multiplication if and only if it is a Clifford
torus.
\end{theorem}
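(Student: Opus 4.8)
The plan is to reduce both implications to the results of Section 4, using the explicit form of $M$ provided by Theorem \ref{teo 1}. Since $M$ is non-planar, flat, and has pointwise $1$-type Gauss map, Theorem \ref{teo 1} gives the parametrization (32); with $d=2k\pi$ the profile curve becomes
\[
\alpha(s)=\bigl(\lambda\cos(b_{0}s),\,0,\,\lambda\sin(b_{0}s),\,0\bigr),\qquad b_{0}^{2}\lambda^{2}=1 .
\]
Thus $x(s)=\lambda\cos\theta(s)$ and $y(s)=\lambda\sin\theta(s)$ with $\theta(s)=b_{0}s$ linear, which is exactly the situation addressed by Proposition \ref{pro 3} and Corollary \ref{cor 2}.

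For the ``if'' direction, assume $M$ is a Clifford torus. Then $\lambda=1$, hence $b_{0}=\pm 1$ by $b_{0}^{2}\lambda^{2}=1$, and the profile curve is $x(s)=\cos(b_{0}s)$, $y(s)=\sin(b_{0}s)$ with $\theta$ linear. Proposition \ref{pro 3} then yields that $M$ is a Lie subgroup of $P$; in particular $M$ is a Lie group under bicomplex multiplication.

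For the ``only if'' direction, assume $M$ is a Lie group with bicomplex multiplication. By Remark \ref{remark 2}, $M\subseteq P$, and by Theorem \ref{teo 2} the set $P$ with bicomplex multiplication is a Lie group; being a submanifold of $P$ closed under the product and inversion, $M$ is therefore a Lie subgroup of $P$. Since the profile curve satisfies $x(s)=\lambda\cos(b_{0}s)$, $y(s)=\lambda\sin(b_{0}s)$ with $\theta(s)=b_{0}s$ linear, Corollary \ref{cor 2} forces $\lambda=1$, whence $b_{0}^{2}=1$. Taking $b_{0}=1$ (the case $b_{0}=-1$ giving the same surface after the reparametrization $s\mapsto -s$), we obtain
\[
X(s,t)=\bigl(\cos s\cos t,\ \cos s\sin t,\ \sin s\cos t,\ \sin s\sin t\bigr),
\]
for which $\langle X,X\rangle=\cos^{2}s+\sin^{2}s=1$; hence $M$ lies in the unit hypersphere $S^{3}\subset\mathbb{E}^{4}$ and, being flat, is a flat torus in $S^{3}$, i.e.\ a Clifford torus.

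The work here is essentially bookkeeping: the substantive content sits in Propositions \ref{pro 1}--\ref{pro 3} and Corollary \ref{cor 2}. The one point needing care is the identification that ``$M$ is a Lie group with bicomplex multiplication'' is equivalent to ``$M$ is a Lie subgroup of $P$'' (so that Corollary \ref{cor 2} applies) and the verification that the normalized $\lambda=1$ surface is genuinely the Clifford torus --- contained in a round $3$-sphere and intrinsically flat. The competing value $\lambda=-1$ left open by $b_{0}^{2}\lambda^{2}=1$ needs no separate treatment here, since it is already excluded inside the proof of Corollary \ref{cor 2} by failure of the closure axiom.
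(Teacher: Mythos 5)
Your proposal is correct and follows essentially the same route as the paper: the ``only if'' direction invokes Corollary \ref{cor 2} to force $\lambda=1$ and hence $b_{0}=\pm 1$, identifying the surface as the Clifford torus, while the ``if'' direction rests on the group structure supplied by Proposition \ref{pro 3} (the paper cites this content via Remark \ref{remark 3} instead). Your added care in identifying ``Lie group under bicomplex multiplication'' with ``Lie subgroup of $P$'' and in verifying that the $\lambda=1$ surface lies in $S^{3}$ only makes explicit what the paper leaves implicit.
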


\begin{proof}
We assume that $M$ is a Lie group with bicomplex multiplication then from Corollary ($\ref{cor 2}$)
we get that $\lambda =1.$ Since $b_{0}^{2}\lambda ^{2}=1,$ it
follows that $b_{0}=\varepsilon $, where $\varepsilon =\pm 1.$ In that case
the surface $M$ is given by
\begin{equation*}
X\left( s,t\right) =\left( \cos \varepsilon s\cos t,\cos \varepsilon s\sin
t,\sin \varepsilon s\cos t\sin \varepsilon s\sin t\right)
\end{equation*}%
and $M$ is a Clifford torus, that is, the product of two plane circle wih
the same radius.

Conversely, Clifford torus is a flat rotational surface with pointwise
1-type Gauss map the surface which can be obtained by the parametrization $%
(32)$ and it is a Lie group with bicomplex multiplication. This completes
the proof.
\end{proof}

\end{document}